\def\DJ{\leavevmode\setbox0=\hbox{D}\kern0pt\rlap
 {\kern.04em\raise.188\ht0\hbox{-}}D}
\def\dj{\leavevmode
 \setbox0=\hbox{d}\kern0pt\rlap{\kern.215em\raise.46\ht0\hbox{-}}d}
\theoremstyle{plain}
 \newtheorem{thm}{Theorem}
 \newtheorem{lem}{Lemma}[section]
 \newtheorem{cor}{Corollary}[section]
\theoremstyle{definition}
 \newtheorem{exm}{Example}[section]
 \newtheorem{rem}{Remark}[section]
 \newtheorem{fgr}{Figure}
\numberwithin{equation}{section}
\numberwithin{equation}{section}
\title[On linear combinations of Chebyshev polynomials]{ON LINEAR COMBINATIONS OF CHEBYSHEV POLYNOMIALS}
\subjclass[2010]{Primary 11B83; Secondary 11R09, 12D10.}
\keywords{Chebyshev polynomials, envelope, Pisot numbers, Salem numbers}
\author[D. Stankov]{\bfseries Dragan Stankov}
\address{
Katedra Matematike RGF-a  \\ 
Universitet u Beogradu   \\ 
11000 Beog-\break rad, \DJ u\v sina 7\\
Serbia}
\email{dstankov@rgf.bg.ac.rs}
\begin{document}

\setcounter{page}{1}

\begin{abstract}
\normalsize 

We investigate an infinite sequence of polynomials of the form:
\[a_0T_{n}(x)+a_{1}T_{n-1}(x)+\cdots+a_{m}T_{n-m}(x)\]
where $(a_0,a_1,\ldots,a_m)$ is a fixed m-tuple of real numbers, $a_0,a_m\ne0$, $T_i(x)$ are Chebyshev polynomials of the first kind, $n=m,m+1,m+2,\ldots$
Here we analyse the structure of the set of zeros of such polynomial, depending on $A$ and its limit points when $n$ tends to infinity.
Also the expression of envelope of the polynomial is given.
An application in number theory, more precise, in the theory of Pisot and Salem numbers is presented.

\end{abstract}

\maketitle

\section{Introduction}

It is well known \cite{Riv} that the Chebyshev polynomial $T_n(x)$ of the first kind is a polynomial
in $x$ of degree n, defined by the relation

$T_n(x) = \cos n\theta\;\;$ when $x = \cos \theta.$

Let $A=(a_0,a_1,\ldots,a_m)$ be a (m+1)-tuple of real numbers, $a_0, a_m\ne 0$, $m\ge 1$ . We introduce an infinite sequence of polynomials
\[T_{n,A}(x)=a_0T_{n}(x)+a_{1}T_{n-1}(x)+\cdots+a_{m}T_{n-m}(x)\;\;(n\ge m).\]
We will refer to $T_{n,A}(x)$ as an A-Chebyshev polynomial.
We can naturally extend this definition in the case $m=0$ and $A=a_0\ne 0$:
\[T_{n,a0}(x)=a_0T_{n}(x).\]
Also, it will be useful to introduce the polynomial
\[P_{A}(x)=a_0x^m+a_{1}x^{m-1}+\cdots+a_{m}.\]
We will refer to $P_{A}(t)$ as the characteristic polynomial of the A-Chebyshev polynomial.

\section{Roots of A-Chebyshev polynomial}

Let $Z_{n,A}$ denote the set of zeros of the A-Chebyshev polynomial.
The aim of this paper is to analyse the structure of the set $\lim \inf Z_{n,A}$, when $n$ tends to infinity, depending on $A$.
$\lim \inf Z_{n,A}$ consists of those elements which are limits of points in $Z_{n,A}$ for all n. That is, $x \in \lim \inf Z_{n,A}$ if and only if there exists a sequence of points $\{x_k\}$ such that $x_k \in Z_{k,A}$ and $x_k \rightarrow x$ as $k \rightarrow \infty$.

\begin{exm}\label{sec:exm101}
Using our notation, the Chebyshev polynomial $T_n(x)$ is A-Chebyshev polynomial with $A=1$. It is well known \cite{MasHan} that the zeros of $T_n(x)$ are
$x_{n,k} = \cos \frac{(n − k + \frac{1}{2})\pi}{n}$ $(k=1,2,\ldots,n)$. It is obviously that $x_{n,0}$ approaches to -1 and $x_{n,n}$ approaches to 1 when $n\rightarrow\infty$. Since $x_{n,k}$ are equispaced, it is clear that
$\lim \inf Z_{n,1}=[-1,1]$
\end{exm}

\begin{exm}\label{sec:exm102}
What can we say for A-Chebyshev polynomial if $A=(2,-5,2)$?
It is well known \cite{MasHan} that the recurrence relation $T_n(x)=2xT_{n-1}(x)-T_{n-2}(x),\;n=2,3,\ldots$ is satisfied. So $T_{n,(2,-5,2)}(x)=2T_n(x)-5T_{n-1}(x)+2T_{n-2}(x)=4xT_{n-1}(x)-2T_{n-2}(x)-5T_{n-1}(x)+2T_{n-2}(x)$,
$T_{n,(2,-5,2)}(x)=(4x-5)T_{n-1}(x)$. Now it is obvious that $x=\frac{5}{4}$ is a zero of $T_{n,(2,-5,2)}(x)$, for all $n=2,3,\ldots$. So $x=\frac{5}{4}\in Z_{n,(2,-5,2)}$ for all $n=2,3,\ldots$. Taking into account the previous example we conclude that $\lim \inf Z_{n,(2,-5,2)}=[-1,1]\cup\{\frac{5}{4}\}$.
\end{exm}

\begin{lem}\label{sec:ACheb}
$T_{n,A}(x) = \frac{1}{2}(P_A(w)w^{n-m}+P_A(w^{-1})w^{-(n-m)})$ where $w=x+\sqrt{x^2-1}$.

\end{lem}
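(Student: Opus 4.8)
The plan is to reduce everything to the classical closed form $T_k(x)=\frac{1}{2}\!\left(w^k+w^{-k}\right)$, with $w=x+\sqrt{x^2-1}$, and then sum. First I would recall why this closed form holds for every $x$, not only for $x\in[-1,1]$. Writing $x=\cos\theta$ gives $w=e^{i\theta}$ and $w^{-1}=e^{-i\theta}$, so $T_k(x)=\cos k\theta=\frac{1}{2}(e^{ik\theta}+e^{-ik\theta})=\frac{1}{2}(w^k+w^{-k})$; since $\frac{1}{2}(w^k+w^{-k})$ is a symmetric Laurent polynomial in $w$, it is a genuine polynomial in $w+w^{-1}=2x$, so the identity between the polynomials $T_k(x)$ and $\frac{1}{2}(w^k+w^{-k})$ is valid for all $x$. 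I would also note that the branch of the square root is irrelevant: replacing $\sqrt{x^2-1}$ by $-\sqrt{x^2-1}$ interchanges $w$ and $w^{-1}$ and leaves every symmetric expression — in particular the right-hand side of the lemma — unchanged.

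Next I would substitute this into the definition of $T_{n,A}$. Since $T_{n,A}(x)=\sum_{j=0}^{m}a_jT_{n-j}(x)$, plugging in the closed form and separating the two sums gives
\[
T_{n,A}(x)=\frac{1}{2}\sum_{j=0}^{m}a_j w^{\,n-j}+\frac{1}{2}\sum_{j=0}^{m}a_j w^{-(n-j)}.
\]
Then I would factor $w^{\,n-m}$ out of the first sum and $w^{-(n-m)}$ out of the second, using $w^{\,n-j}=w^{\,n-m}w^{\,m-j}$ and $w^{-(n-j)}=w^{-(n-m)}w^{-(m-j)}$:
\[
\sum_{j=0}^{m}a_j w^{\,n-j}=w^{\,n-m}\sum_{j=0}^{m}a_j w^{\,m-j}=w^{\,n-m}P_A(w),
\]
and symmetrically $\sum_{j=0}^{m}a_j w^{-(n-j)}=w^{-(n-m)}\sum_{j=0}^{m}a_j (w^{-1})^{m-j}=w^{-(n-m)}P_A(w^{-1})$. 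Combining the two identities yields exactly the asserted formula.

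There is no genuinely hard step here; the only point that deserves care is the one flagged above — treating $T_k(x)=\frac{1}{2}(w^k+w^{-k})$ as an identity of polynomials in $x$ (equivalently, fixing a branch of $\sqrt{x^2-1}$ off $x=\pm1$ and extending by continuity), and observing that the final expression is independent of that branch. Once that is in place the computation is a one-line rearrangement. In fact the same manipulation expresses any linear combination of Chebyshev polynomials in terms of $P_A$ evaluated at $w$ and $w^{-1}$, which is the representation that will be used in the later sections to locate the limit points of the zero sets $Z_{n,A}$ and to write down the envelope.
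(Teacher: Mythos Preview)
Your proof is correct and follows essentially the same route as the paper: substitute $T_k(x)=\tfrac{1}{2}(w^k+w^{-k})$ into the definition, split into two sums, and factor out $w^{n-m}$ and $w^{-(n-m)}$ to recognize $P_A(w)$ and $P_A(w^{-1})$. Your additional remarks on validity for all $x$ and branch-independence are welcome extra care that the paper does not spell out.
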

\begin{proof}
Starting from the definition of A-Chebyshev polynomial and using well known \cite{MasHan} formula $T_n(x) = \frac{1}{2}(w^n+w^{-n})$ we have:
\begin{eqnarray*}
T_{n,A}(x) &= &\sum_{i=0}^m a_iT_{n-i}(x)\\
     &=&\sum_{i=0}^m a_i\frac{1}{2}(w^{n-i}+w^{-n+i})\\
     &=&\frac{1}{2}(\sum_{i=0}^m a_iw^{n-i}+\sum_{i=0}^m a_iw^{-n+i})\\
     &=&\frac{1}{2}(w^{n-m}\sum_{i=0}^m a_iw^{m-i}+w^{-n+m}\sum_{i=0}^m a_iw^{-m+i})\\
     &=& \frac{1}{2}(w^{n-m}P_A(w)+w^{-n+m}P_A(w^{-1})).
\end{eqnarray*}
\end{proof}
One can calculate that if $w=x+\sqrt{x^2-1}$ then $x=\frac{1}{2}(w+w^{-1})$. So, from the previous lemma we can deduce next
\begin{cor}\label{sec:recipZero}
If there iz $w$ such as $ P_A(w)=P_A(w^{-1})=0$ then $T_{n,A}(x)=0$ for $x=\frac{1}{2}(w+w^{-1})$ and for all $n\ge m$.
\end{cor}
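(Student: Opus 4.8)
The plan is to obtain the statement as an immediate consequence of Lemma~\ref{sec:ACheb}. Suppose $w$ is a (complex) number with $P_A(w)=P_A(w^{-1})=0$; note first that $w\neq0$, since $a_m\neq0$ means $0$ is not a root of $P_A$, so $w^{-1}$ is well defined. Put $x=\frac{1}{2}(w+w^{-1})$. The computation recorded just before the corollary shows that $x=\frac{1}{2}(w+w^{-1})$ is exactly the relation corresponding to $w=x+\sqrt{x^2-1}$; concretely, $x^2-1=\frac{1}{4}(w-w^{-1})^2$, so $\sqrt{x^2-1}=\pm\frac{1}{2}(w-w^{-1})$ and hence $x+\sqrt{x^2-1}$ equals $w$ or $w^{-1}$ according to the choice of sign. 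Either choice is fine for us, because the formula of Lemma~\ref{sec:ACheb} for $T_{n,A}(x)$ is symmetric under $w\leftrightarrow w^{-1}$, exactly as is our hypothesis $P_A(w)=P_A(w^{-1})=0$.

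Applying Lemma~\ref{sec:ACheb} with this $w$ (or with $w^{-1}$, whichever equals $x+\sqrt{x^2-1}$) then gives
\[
T_{n,A}(x)=\frac{1}{2}\bigl(P_A(w)\,w^{n-m}+P_A(w^{-1})\,w^{-(n-m)}\bigr)=0
\qquad (n\ge m),
\]
since both coefficients $P_A(w)$ and $P_A(w^{-1})$ vanish by assumption. This is precisely the asserted conclusion, so no further work is needed.

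I do not expect a genuine obstacle: the corollary is essentially a one-line substitution into the lemma. The only point that deserves a word of care is the ambiguity in the branch of $\sqrt{x^2-1}$ — equivalently, the fact that the map $w\mapsto\frac{1}{2}(w+w^{-1})$ is two-to-one and Lemma~\ref{sec:ACheb} singles out one preimage. This ambiguity is neutralised by the built-in symmetry between $w$ and $w^{-1}$ in both the hypothesis and the formula of Lemma~\ref{sec:ACheb}, so no real case analysis is required; one could even phrase the argument so as to avoid mentioning the branch at all, simply invoking the equivalence $w=x+\sqrt{x^2-1}\iff x=\frac{1}{2}(w+w^{-1})$ noted in the text.
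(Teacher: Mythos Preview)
Your proof is correct and follows essentially the same approach as the paper: the corollary is presented there as an immediate consequence of Lemma~\ref{sec:ACheb} via the relation $w=x+\sqrt{x^2-1}\iff x=\frac{1}{2}(w+w^{-1})$. Your additional remarks on $w\neq 0$ and on the branch ambiguity of $\sqrt{x^2-1}$ being absorbed by the $w\leftrightarrow w^{-1}$ symmetry are more careful than what the paper states explicitly, but the underlying argument is the same.
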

In the previous example we can see that $2,\frac{1}{2}$ are roots of $P_A(x)=2x^2-5x+2$, therefore $x=\frac{1}{2}(2+\frac{1}{2})=\frac{5}{4}$ is a zero of $T_{n,A}(x)$ for all $n\ge 2$.

For the next corollary we need the following definition: the set T of Salem numbers is the set of real algebraic
integers $\tau$ greater than 1, such that all its conjugate roots have modulus at most equal to
1, one at least having a modulus equal to 1.
\begin{cor}\label{sec:recipZero1}
If $\tau$ is a Salem number and $ P_A(x)$ is its minimal polynomial then  $T_{n,A}(x)=0$ for $x=\frac{1}{2}(\tau+\tau^{-1})$ and for all $n\ge m$.
\end{cor}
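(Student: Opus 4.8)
The plan is to specialise Corollary~\ref{sec:recipZero}: if I can produce a $w$ with $P_A(w)=P_A(w^{-1})=0$ and $\frac{1}{2}(w+w^{-1})=\frac{1}{2}(\tau+\tau^{-1})$, the conclusion is immediate. The obvious candidate is $w=\tau$. Since $P_A$ is by hypothesis the minimal polynomial of $\tau$, we have $P_A(\tau)=0$ for free, so everything reduces to showing $P_A(\tau^{-1})=0$, i.e. that $\tau^{-1}$ is again a conjugate of $\tau$.

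For this I would invoke the classical fact that the minimal polynomial of a Salem number is \emph{reciprocal}: writing $P_A$ monic (as the minimal polynomial of an algebraic integer, so $a_0=1$) and $P_A^{*}(x)=x^{m}P_A(1/x)$ for its reverse, one has $P_A^{*}=P_A$, equivalently $a_i=a_{m-i}$ for all $i$. The short argument runs as follows. Since $\tau\neq0$, from $P_A(\tau)=0$ we get $P_A^{*}(\tau^{-1})=0$, and $P_A^{*}$ is irreducible over $\mathbb{Q}$ of degree $m$ (the reversal operation is an involution on polynomials with nonzero constant term and respects factorisation, and $P_A(0)=a_m\neq0$); hence either $\gcd(P_A,P_A^{*})=1$ or $P_A\mid P_A^{*}$. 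In the second case $P_A^{*}=cP_A$ for a constant $c$, and matching leading and constant coefficients forces $c=\pm1$; evaluating at $x=1$ eliminates $c=-1$ (it would give $P_A(1)=0$, impossible for an irreducible polynomial of degree $\ge2$), so $P_A^{*}=P_A$, i.e. $P_A$ is reciprocal. The remaining case $\gcd(P_A,P_A^{*})=1$ is ruled out by the modulus hypotheses in the definition of a Salem number (all conjugates but $\tau$ lie in the closed unit disc, at least one on the unit circle, and $\pm P_A(0)=\prod_i\tau_i$ is a nonzero rational integer) — this is exactly Salem's theorem, which I would cite rather than reprove in detail. I expect this step, establishing that $\tau^{-1}$ is a root of $P_A$, to be the only real obstacle in the corollary.

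Granting reciprocality of $P_A$, the rest is a one‑liner: from $x^{m}P_A(1/x)=P_A(x)$ and $P_A(\tau)=0$ we obtain $\tau^{m}P_A(\tau^{-1})=P_A(\tau)=0$, whence $P_A(\tau^{-1})=0$ since $\tau\neq0$. Thus $w=\tau$ satisfies the hypothesis $P_A(w)=P_A(w^{-1})=0$ of Corollary~\ref{sec:recipZero}, which yields $T_{n,A}(x)=0$ at $x=\frac{1}{2}(\tau+\tau^{-1})$ for every $n\ge m$. As a consistency check, $\tau$ real and $>1$ gives $x=\frac{1}{2}(\tau+\tau^{-1})>1$, and the corresponding $w=x+\sqrt{x^{2}-1}$ equals $\tau$, in agreement with the substitution used in Lemma~\ref{sec:ACheb}.
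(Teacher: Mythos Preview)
Your approach is exactly the paper's: apply Corollary~\ref{sec:recipZero} with $w=\tau$, reducing the claim to the well-known fact that the minimal polynomial of a Salem number satisfies $P_A(\tau)=P_A(\tau^{-1})=0$ (which the paper simply cites from \cite{Ber}). Your additional sketch of why $P_A$ is reciprocal is correct and more detailed than what the paper provides, but the underlying strategy is identical.
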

The claim is a direct consequence of a well known property of a Salem number \cite{Ber} that $ P_A(\tau)=P_A(\tau^{-1})=0$.
\begin{thm}\label{cha:nulaPreko1}
If there iz a root $\omega$, out of the unit circle, of the polynomial $P_A$, that is $ P_A(\omega)=0, |\omega|>1$, then for every real number $\varepsilon > 0$, there exists a natural number $n_0$ such that for all $n > n_0$, there is a root $\xi$ of the A-Chebyshev polynomial $T_{n,A}(x)$ such that $|\xi-\frac{1}{2}(\omega+\omega^{-1})|<\varepsilon$.
\end{thm}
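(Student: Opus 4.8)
The plan is to pass to the variable $w$ and apply Rouché's theorem. The conceptual key is to note that the identity of Lemma \ref{sec:ACheb} is in fact a polynomial (Laurent) identity: for \emph{every} $w\neq 0$ one has
\[
T_{n,A}\bigl(\tfrac{1}{2}(w+w^{-1})\bigr)=\tfrac{1}{2}\bigl(w^{n-m}P_A(w)+w^{-(n-m)}P_A(w^{-1})\bigr),
\]
because $T_n\bigl(\tfrac{1}{2}(w+w^{-1})\bigr)=\tfrac{1}{2}(w^n+w^{-n})$ holds for all $w\neq0$ (both sides are Laurent polynomials in $w$ agreeing on $|w|=1$). Multiplying by $2w^{n-m}$, we see that the number $x=\tfrac{1}{2}(w+w^{-1})$ is a zero of $T_{n,A}$ as soon as $w\neq0$ is a zero of
\[
g_n(w):=P_A(w)\,w^{2(n-m)}+P_A(w^{-1}).
\]
So it suffices to produce, for all large $n$, a zero $w_n$ of $g_n$ close to $\omega$, and then to transfer the smallness to $\xi:=\tfrac{1}{2}(w_n+w_n^{-1})$ using the continuity of $w\mapsto\tfrac{1}{2}(w+w^{-1})$ at $\omega$ (legitimate since $|\omega|>1$, so $\omega\neq0$).

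Fix $\varepsilon>0$. Choose $\delta>0$ so small that: (i) the closed disk $\overline{D(\omega,\delta)}$ contains no zero of $P_A$ other than $\omega$ and does not contain $0$; (ii) $\delta<|\omega|-1$, so that $|w|\ge\rho:=|\omega|-\delta>1$ on $\overline{D(\omega,\delta)}$; and (iii) $|w-\omega|\le\delta$ implies $\bigl|\tfrac{1}{2}(w+w^{-1})-\tfrac{1}{2}(\omega+\omega^{-1})\bigr|<\varepsilon$. On the circle $|w-\omega|=\delta$ set $c:=\min_{|w-\omega|=\delta}|P_A(w)|>0$ (positive by (i)) and $M:=\max_{|w-\omega|=\delta}|P_A(w^{-1})|<\infty$ (finite by (i), since $0\notin\overline{D(\omega,\delta)}$). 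Then on that circle $\bigl|P_A(w)\,w^{2(n-m)}\bigr|\ge c\,\rho^{2(n-m)}$, and since $\rho>1$ this quantity exceeds $M\ge|P_A(w^{-1})|$ for all $n>n_0$, where $n_0$ depends only on $c,M,\rho,m$. For such $n$, Rouché's theorem on $D(\omega,\delta)$ applied to the holomorphic functions $f(w)=P_A(w)w^{2(n-m)}$ and $h(w)=P_A(w^{-1})$ shows that $g_n=f+h$ has in $D(\omega,\delta)$ exactly as many zeros, counted with multiplicity, as $f$; and $f$ has there precisely the multiplicity of $\omega$ as a zero of $P_A$ (the factor $w^{2(n-m)}$ is zero-free near $\omega$), which is at least $1$. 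Pick such a zero $w_n\in D(\omega,\delta)$; by the identity above $\xi:=\tfrac{1}{2}(w_n+w_n^{-1})$ satisfies $T_{n,A}(\xi)=0$, and by (iii) $\bigl|\xi-\tfrac{1}{2}(\omega+\omega^{-1})\bigr|<\varepsilon$, as required.

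The main thing requiring care is the set-up of the Rouché estimate: one must shrink the disk around $\omega$ enough to exclude the other zeros of $P_A$ and the origin (so that $c>0$ and $M<\infty$) while keeping $|w|>1$ on it; everything else is routine. It is worth adding a remark that in the case relevant to the number-theoretic application, where $\omega$ is real and a \emph{simple} zero of $P_A$, one can avoid Rouché and obtain a \emph{real} root $\xi$: for real $w>1$ (resp.\ $w<-1$) one has $2w^{n-m}T_{n,A}\bigl(\tfrac{1}{2}(w+w^{-1})\bigr)=g_n(w)$, and since $P_A$ changes sign at $\omega$ while $w^{2(n-m)}\to\infty$, the values $g_n(\omega-\delta)$ and $g_n(\omega+\delta)$ have opposite signs for large $n$; the intermediate value theorem then yields a real zero of $g_n$, hence of $T_{n,A}$, within $\varepsilon$ of $\tfrac{1}{2}(\omega+\omega^{-1})$.
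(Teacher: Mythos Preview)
Your proof is correct and follows essentially the same route as the paper: both arguments pass to the $w$-variable via Lemma~\ref{sec:ACheb}, isolate $\omega$ in a small disk outside the unit circle, and apply Rouch\'e's theorem comparing the ``large'' term $P_A(w)w^{\,\cdot}$ against the ``small'' term involving $P_A(w^{-1})$. The only cosmetic difference is that you first multiply through by $2w^{n-m}$ to work with the polynomial $g_n(w)=P_A(w)w^{2(n-m)}+P_A(w^{-1})$, whereas the paper applies Rouch\'e directly to $f(w)=\tfrac12 P_A(w)w^{n-m}$ and $g(w)=\tfrac12 P_A(w^{-1})w^{-(n-m)}$; since these differ by the nonvanishing factor $2w^{n-m}$ on the disk, the arguments are equivalent. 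Your closing remark---that for a real simple root $\omega$ one can bypass Rouch\'e via the intermediate value theorem and obtain a \emph{real} $\xi$---is a worthwhile addition not present in the paper.
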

\begin{proof}
It is convenient to use Lemma \ref{sec:ACheb} to express  $T_{n,A}(x)$ $ = \frac{1}{2}P_A(w)w^{n-m}+\frac{1}{2}P_A(w^{-1})w^{-(n-m)}$ where $w=x+\sqrt{x^2-1}$, or equivalently  $x=x(w)=\frac{1}{2}(w+w^{-1})$. Since $x(w)$ is continuous for $w>0$, there is $\delta_1 >0$ such that if $|w-\omega|<\delta_1$ then $|\frac{1}{2}(w+w^{-1})-\frac{1}{2}(\omega+\omega^{-1})|<\varepsilon$. We can take an $\delta_2 <|\omega|-1$ such that, in the circle $\{z:|z-\omega|\le \delta_2\}$, there is no root of $P_A(w)$ which is different from $ \omega$.
Let $\delta=\min(\delta_1, \delta_2)$ and $C=\{z:|z-\omega|\le \delta\}$.
Since $\partial C$, the boundary of $C$, is a compact set, $|P_A(w)|$, $|P_A(w^{-1})|$ are continuous on $\partial C$, there is $w_{min}$ where $|P_A(w)|$ attains its minimum, and $w_{max}$ where $|P_A(w^{-1})|$ attains its maximum on $\partial C$. Since $\frac{1}{2}|P_A(w_{max}^{-1})|$ is constant and $|\omega|-\delta >1$, there is $n_0$ such that $\frac{1}{2}|P_A(w_{min})|(|\omega|-\delta)^{n_0-m}>\frac{1}{2}|P_A(w_{max}^{-1})|$.
For $n\ge n_0$, let us denote $f(w)=\frac{1}{2}P_A(w)w^{n-m}$, $g(w)=\frac{1}{2}P_A(w^{-1})w^{-(n-m)}$. This notation corresponds to Rouch\'{e}'s theorem which we intend to use. We have to prove that $|f(w)|>|g(w)|$ on $\partial C$.
Since $|w|\ge |\omega|-\delta >1$ we have on $\partial C$:
\begin{eqnarray*}
|f(w)| &= &\frac{1}{2}|P_A(w)||w|^{n-m}\\
     &\ge&\frac{1}{2}|P_A(w_{min})|(|\omega|-\delta)^{n_0-m}\\
     &>&\frac{1}{2}|P_A(w_{max}^{-1})|\\
     &\ge&\frac{1}{2}|P_A(w^{-1})||w|^{-(n-m)}|\\
     &=& |g(w)|.
\end{eqnarray*}
The conditions in Rouch\'{e}'s theorem are thus satisfied. Consequently, since $f(w)$ has root $\omega$, we conclude that $f(w)+g(w)$ has a root, let it be $\omega_1$, inside the circle $C$ . Clearly, since $|\omega_1-\omega|<\delta_1$, if we denote $\xi=\frac{1}{2}(\omega_1+\omega_1^{-1})$, we conclude
$|\xi-\frac{1}{2}(\omega+\omega^{-1})|<\varepsilon$. Finally, we conclude that $T_{n,A}(\xi)=\frac{1}{2}P_A(\omega_1)\omega_1^{n-m}+\frac{1}{2}P_A(\omega_1^{-1})\omega_1^{-(n-m)}=f(\omega_1)+g(\omega_1)=0$.
\end{proof}
\begin{thm}\label{cha:nulaIspod1}
If $x\in[-1,1]$, then for every real number $\varepsilon > 0$, there exists a natural number $n_0$ such that for all $n > n_0$, there is a root $\xi$ of the A-Chebyshev polynomial $T_{n,A}(x)$ such that $|x-\xi|<\varepsilon$.
\end{thm}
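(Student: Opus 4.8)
The plan is to parametrize $[-1,1]$ by $x=\cos\theta$ with $\theta\in[0,\pi]$ and to read the zeros of $T_{n,A}$ on $[-1,1]$ directly off Lemma~\ref{sec:ACheb}. For such $x$ one has $w=x+\sqrt{x^2-1}=e^{i\theta}$ (replacing $w$ by $w^{-1}$ if necessary, which does not change the expression in the lemma), and since the entries of $A$ are real, $P_A(e^{-i\theta})=\overline{P_A(e^{i\theta})}$. Hence Lemma~\ref{sec:ACheb} rewrites as
\[
T_{n,A}(\cos\theta)=\frac12\bigl(P_A(e^{i\theta})e^{i(n-m)\theta}+\overline{P_A(e^{i\theta})}\,e^{-i(n-m)\theta}\bigr)=\rho(\theta)\cos\bigl((n-m)\theta+\alpha(\theta)\bigr),
\]
where $\rho(\theta)=|P_A(e^{i\theta})|$ and $\alpha(\theta)=\arg P_A(e^{i\theta})$. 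Thus on $[-1,1]$ the zero set of $T_{n,A}$ consists of the points $\cos\theta$ with $\rho(\theta)=0$ or with $(n-m)\theta+\alpha(\theta)\in\frac{\pi}{2}+\pi\mathbb Z$, and the problem reduces to: given $\theta_0$, find $\theta$ near $\theta_0$ with $\cos\bigl((n-m)\theta+\alpha(\theta)\bigr)=0$.

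First I would dispose of the degenerate case. If $\rho(\theta_0)=0$ then $P_A(e^{i\theta_0})=0=\overline{P_A(e^{i\theta_0})}=P_A(e^{-i\theta_0})$, so Corollary~\ref{sec:recipZero} makes $x_0=\cos\theta_0=\frac12(e^{i\theta_0}+e^{-i\theta_0})$ a zero of $T_{n,A}$ for every $n\ge m$, and one takes $\xi=x_0$. Otherwise $\rho(\theta_0)>0$, and by continuity of $\rho$ and of $\cos$ I would choose $\eta>0$ so small that on $J:=[\theta_0-\eta,\theta_0+\eta]\cap[0,\pi]$ one has $\rho>0$ and $|\cos\theta-x_0|<\varepsilon$. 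On $J$ the function $P_A(e^{i\theta})$ is continuous and nowhere zero, hence admits a continuous branch of argument, which I fix as $\alpha$. Note $J$ is an interval of positive length $\ell$ (namely $2\eta$, or $\eta$ when $\theta_0$ is an endpoint of $[0,\pi]$).

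The last step is an oscillation/intermediate value argument. With $\psi_n(\theta)=(n-m)\theta+\alpha(\theta)$ continuous on $J$ and $\theta_-<\theta_+$ the endpoints of $J$, one has $\psi_n(\theta_+)-\psi_n(\theta_-)=(n-m)\ell+\bigl(\alpha(\theta_+)-\alpha(\theta_-)\bigr)\ge(n-m)\ell-\Omega$, where $\Omega$ is the (finite) oscillation of $\alpha$ on $J$. Choosing $n_0$ so that $(n-m)\ell-\Omega>\pi$ for all $n>n_0$ forces $\psi_n(J)$ to be an interval of length exceeding $\pi$, hence to contain a point of $\frac{\pi}{2}+\pi\mathbb Z$; the intermediate value theorem then yields $\theta^\ast\in J$ with $\cos\psi_n(\theta^\ast)=0$, so $\xi:=\cos\theta^\ast$ satisfies $T_{n,A}(\xi)=\rho(\theta^\ast)\cos\psi_n(\theta^\ast)=0$ and $|\xi-x_0|<\varepsilon$.

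I do not expect a serious obstacle here; unlike Theorem~\ref{cha:nulaPreko1}, no Rouch\'e argument is needed, since the rapid oscillation of $\cos((n-m)\theta+\alpha(\theta))$ in $\theta$ does all the work. The only points requiring a little care are the endpoint cases $x_0=\pm1$, where $J$ is a one-sided interval (the estimate above still applies verbatim), and making the branch of $\arg P_A(e^{i\theta})$ legitimate, which is fine because $P_A$ has only finitely many zeros on the unit circle and we work on a subinterval avoiding them.
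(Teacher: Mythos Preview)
Your argument is correct and rests on the same idea as the paper's: pass to the angular variable $\theta$ via $x=\cos\theta$, separate an $n$-independent factor from the rapidly oscillating one, and use the growing frequency to force a zero inside any prescribed neighbourhood for all large $n$. The executions differ slightly. The paper expands $\cos((n-k)\theta)$ by the addition formula and rewrites $T_{n,A}(\cos\theta)=0$ as $\tan((n-m)\theta)=R(\theta)$ with $R$ independent of $n$, then appeals to the increasing density of the branches of $\tan((n-m)\theta)$. You instead invoke Lemma~\ref{sec:ACheb} to obtain the amplitude--phase form $T_{n,A}(\cos\theta)=\rho(\theta)\cos\bigl((n-m)\theta+\alpha(\theta)\bigr)$ and run a clean intermediate-value argument on the phase. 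The two reformulations are equivalent (in fact $R(\theta)=\cot\alpha(\theta)$), but your version has two small advantages: it disposes of the degenerate case $P_A(e^{i\theta_0})=0$ explicitly via Corollary~\ref{sec:recipZero}, and it gives a transparent quantitative choice of $n_0$ in terms of the length of $J$ and the oscillation of $\alpha$, whereas the paper's interlacing/doubling sketch is more heuristic.
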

\begin{proof}
Directly from the definitions of the Chebyshev polynomial and the A-Chebyshev polynomial we can show that
\begin{equation}\label{Cheb}
T_{n,A}(x)=a_0\cos n\theta+a_{1}\cos (n-1)\theta+\cdots+a_{m}\cos (n-m)\theta,\;\;n\ge m,
\end{equation} when $x = \cos \theta$.
Since $a_{k}\cos (n-k)\theta=a_{k}\cos (n-m+m-k)\theta=a_{k}(\cos (n-m)\theta\cos (m-k)\theta-\sin (n-m)\theta\sin (m-k)\theta)$ the equation $T_{n,A}(x)=0$ is equivalent with
\[\cos (n-m)\theta\sum_{k=0}^m a_{k}\cos (m-k)\theta=\sin (n-m)\theta\sum_{k=0}^m a_{k}\sin (m-k)\theta.\]
Finally we get \[\tan (n-m)\theta=\frac{\sum_{k=0}^m a_{k}\cos (m-k)\theta}{\sum_{k=0}^m a_{k}\sin (m-k)\theta} .\] The function on the right, let call it $R(\theta)$, does not depend on $n$. The graph of $\tan (n-m)\theta$ consists of parallel equispaced
tangents branches. So if we take $n:=2n-m$ we double $n-m$ and get a new graph which is actually the union of the old one with branches settled in the middle of each pair of neighbouring branches of the old graph. We conclude that all roots of $\tan (n-m)\theta=R(\theta)$, remain to be the roots of $\tan 2(n-m)\theta=R(\theta)$, and new roots interlace with old. Finally, changing variables $\theta=\arccos x$ will preserve order and denseness of the roots.
\end{proof}

\section{Envelope of an A-Chebyshev polynomial}

Let us observe the Chebishev polynomial $T_n(x)$ again. It is well known that, for any $n$, the graph of
the polynomial oscillates between -1 and 1 when $x\in[-1,1]$. As $n$ increases we have more and more oscillations.
Something like that we have in the case an A-Chebyshev polynomial.
\begin{exm}\label{sec:exm301}
Let $A=(1,0,1)$, so $T_{n,A}(x)=T_{n}(x)+T_{n-2}(x)=2xT_{n-1}(x)-T_{n-2}(x)+T_{n-2}(x)=2xT_{n-1}(x)$. Now it is obvious that $T_{n,A}(x)$ oscillates between lines $y=\pm 2x$, for $x\in [-1,1]$. We will refer to these lines as an envelope of the A-Chebyshev polynomial.
\end{exm}
Using the expression  \eqref{Cheb} we can study the following
\begin{exm}\label{sec:exm302}
Let $A=(1,0,-1)$, so $T_{n,A}(x)=\cos n\theta-\cos ((n-2)\theta)=-2\sin((n-1)\theta) \sin \theta=-2\sin((n-1)\theta)\sqrt{1-\cos^2\theta}=-2\sin((n-1)\theta)\sqrt{1-x^2}$.
Now it is obvious that $T_{n,A}(x)$ oscillates between upper and lower half of the ellipse $y=\pm 2\sqrt{1-x^2}$, for $x\in [-1,1]$. These halves constitute the envelope of the A-Chebyshev polynomial in this case.
\end{exm}
With the same technique we can find the envelope in the next
\begin{exm}\label{sec:exm303}
Let $A=(1,-1)$, so $T_{n,A}(x)=\cos n\theta-\cos ((n-1)\theta)=-2\sin((n-\frac{1}{2})\theta) \sin \frac{\theta}{2}=-2\sin((n-\frac{1}{2})\theta)\sqrt{\frac{1-\cos\theta}{2}}=-\sqrt{2}\sin((n-\frac{1}{2})\theta)\sqrt{1-x}$.
Now it is obvious that the envelope of the A-Chebyshev polynomial is a parabola $y=\pm \sqrt{2}\sqrt{1-x}$, for $x\in [-1,1]$. \end{exm}
Using previous examples, we can formulate the characteristics that an envelope of the A-Chebyshev polynomial must have.

\begin{enumerate}[({Env}1)]
\item The Envelope depends only on $A$. If $A$ is fixed, it is unique for $T_{n,A}(x)$, $n\in \mathbb{N}$.
\item The Envelope is a non negative function.
\item The A-Chebyshev polynomial is not greater in modulus than the envelope, $x\in [-1,1]$.
\item The Envelope is a smooth function except at its zeros.
\item If the envelope and the A-Chebyshev polynomial have equal positive value in $x$ they have also equal the first derivative in $x$.
\end{enumerate}
We shall define the envelope of the A-Chebyshev polynomial as a function which satisfies the characteristics (Env1)-(Env5).
It is naturally to ask how can we find the envelope for an A-Chebyshev polynomial. The next lemma will be useful.
\begin{lem}\label{sec:Koren}
Let $R(t)$, $I(t)$ be real differentiable functions of real argument, with $R'(t)$, $I'(t)$ continuous, $E(t)=\sqrt{R^2(t)+I^2(t)}$, $t\in \mathbb{R}$. Then following three statements are satisfied:
\begin{enumerate}[({}i)]
\item $|R(t)|\le E(t)$,
\item $|R(t)|= E(t)$ if and only if $I(t)=0$,
\item if $I(t)=0$ and $R(t)>0$ then $R(t)= E(t)$ and $R'(t)=E'(t)$.
\end{enumerate}

\end{lem}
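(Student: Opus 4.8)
The plan is to dispatch the three assertions in order, each reducing to elementary algebra together with one application of the chain rule. For (i), simply observe that $R^2(t)\le R^2(t)+I^2(t)=E^2(t)$ for every $t$; since $|R(t)|$ and $E(t)$ are both nonnegative, taking square roots yields $|R(t)|\le E(t)$. For (ii), square the (nonnegative) quantities $|R(t)|$ and $E(t)$: the equality $|R(t)|=E(t)$ is then equivalent to $R^2(t)=R^2(t)+I^2(t)$, i.e. to $I^2(t)=0$, i.e. to $I(t)=0$.

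For (iii), fix a point $t_0$ with $I(t_0)=0$ and $R(t_0)>0$. By (ii) we immediately get $E(t_0)=|R(t_0)|=R(t_0)$, so it remains to compare derivatives. Since $R$ is continuous and $R(t_0)>0$, there is a neighbourhood of $t_0$ on which $R^2+I^2\ge R^2>0$; on that neighbourhood $E=\sqrt{R^2+I^2}$ is the composition of the differentiable map $t\mapsto R^2(t)+I^2(t)$ (differentiable because $R'$ and $I'$ exist) with the square root, which is differentiable away from $0$. Hence $E$ is differentiable near $t_0$ and
\[
E'(t)=\frac{R(t)R'(t)+I(t)I'(t)}{\sqrt{R^2(t)+I^2(t)}}.
\]
Evaluating at $t_0$ and using $I(t_0)=0$ together with $E(t_0)=R(t_0)>0$, the term $I(t_0)I'(t_0)$ vanishes and we are left with $E'(t_0)=R(t_0)R'(t_0)/R(t_0)=R'(t_0)$, as required.

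The only point demanding a moment's care is the differentiability of $E$ at $t_0$: the square root is not differentiable at $0$, but here the radicand is bounded below by a positive constant in a neighbourhood of $t_0$, so this causes no trouble; and although $I(t_0)=0$, the variable $I$ enters quadratically, so its contribution to $E'$ vanishes at $t_0$ rather than producing a singularity. Beyond this observation I anticipate no genuine obstacle, and the computation of $E'$ in a neighbourhood of $t_0$ is entirely routine.
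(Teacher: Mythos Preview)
Your proof is correct and follows essentially the same route as the paper: parts (i) and (ii) are dismissed as elementary, and for (iii) one computes $E'(t)=\dfrac{R(t)R'(t)+I(t)I'(t)}{\sqrt{R^2(t)+I^2(t)}}$ by the chain rule and evaluates at the point where $I=0$, $R>0$. Your version is in fact a little more careful than the paper's, since you explicitly justify that $E$ is differentiable in a neighbourhood of $t_0$ (the radicand being bounded away from zero there), a point the paper leaves implicit.
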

\begin{proof}
The first and the second statements are straightforward. To demonstrate that $R'(t)=E'(t)$ we need to determinate
$E'(t)=\frac{2R(t)R'(t)+2I(t)I'(t)}{2\sqrt{R^2(t)+I^2(t)}}$. Using $I(t)=0$, $R(t)>0$ we get the claim.
\end{proof}
\begin{thm}\label{cha:obvojnica}
The envelope $E_A(x)$ for an A-Chebyshev polynomial
$T_{n,A}(x)$ is the square root of the modulus of
\[\sum_{i=0}^m a_i^2+2\sum_{i=0}^{m-1}a_ia_{i+1}T_1(x)+2\sum_{i=0}^{m-2}a_ia_{i+2}T_2(x)+\cdots\]
\[\cdots+2\sum_{i=0}^{m-k}a_ia_{i+k}T_k(x)+\cdots+2a_0a_mT_m(x),\]
or in more compact form \[E_A(x)=\sqrt{\left|\sum_{i=0}^m \sum_{k=0}^m a_ia_kT_{|i-k|}(x)\right|}.\]
\end{thm}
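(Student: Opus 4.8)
The plan is to use \eqref{Cheb} to realise $T_{n,A}(\cos\theta)$ as the real part of a single complex oscillation whose modulus is independent of $n$, to take that modulus as $E_A$, and then to verify the characteristics (Env1)--(Env5) with the help of Lemma \ref{sec:Koren}.

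First I would put $x=\cos\theta$ and rewrite \eqref{Cheb} as
\[T_{n,A}(\cos\theta)=\sum_{k=0}^m a_k\cos(n-k)\theta=\mathrm{Re}\Bigl(e^{in\theta}\sum_{k=0}^m a_ke^{-ik\theta}\Bigr)=\mathrm{Re}\bigl(e^{i(n-m)\theta}P_A(e^{i\theta})\bigr),\]
using that $e^{-im\theta}P_A(e^{i\theta})=\sum_{k=0}^m a_ke^{-ik\theta}$. (Equivalently, this is Lemma \ref{sec:ACheb} with $w=e^{i\theta}$.) Setting $F_n(\theta)=e^{i(n-m)\theta}P_A(e^{i\theta})$, $R(\theta)=\mathrm{Re}\,F_n(\theta)=T_{n,A}(\cos\theta)$ and $I(\theta)=\mathrm{Im}\,F_n(\theta)$, both $R$ and $I$ are trigonometric polynomials and hence have continuous derivatives, so Lemma \ref{sec:Koren} applies with
\[E(\theta)=\sqrt{R(\theta)^2+I(\theta)^2}=|F_n(\theta)|=|P_A(e^{i\theta})|,\]
a quantity that does not depend on $n$. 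I would then define $E_A$ on $[-1,1]$ by $E_A(\cos\theta)=|P_A(e^{i\theta})|$; this is legitimate because $|P_A(e^{i\theta})|=|P_A(e^{-i\theta})|$, so the right-hand side depends on $\theta$ only through $\cos\theta$.

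Next I would identify $E_A$ with the announced formula. Since the $a_k$ are real,
\[|P_A(e^{i\theta})|^2=P_A(e^{i\theta})\,P_A(e^{-i\theta})=\sum_{i=0}^m\sum_{k=0}^m a_ia_ke^{i(k-i)\theta};\]
as the left-hand side is real, taking real parts gives $\sum_{i,k}a_ia_k\cos((i-k)\theta)=\sum_{i,k}a_ia_kT_{|i-k|}(\cos\theta)$, and grouping the terms with $|i-k|$ fixed turns this into $\sum_{i=0}^m a_i^2+2\sum_{j=1}^m\bigl(\sum_{i=0}^{m-j}a_ia_{i+j}\bigr)T_j(x)$, which is exactly the displayed linear combination. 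On $[-1,1]$ this radicand equals $|P_A(e^{i\theta})|^2\ge 0$, so the outer modulus is inactive there (it only serves to make the expression meaningful for $x$ outside $[-1,1]$), and indeed $E_A(x)=\sqrt{|\sum_{i,k}a_ia_kT_{|i-k|}(x)|}$.

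Finally I would check (Env1)--(Env5). Properties (Env1) and (Env2) are immediate from the formula: it involves only $A$, and it is a nonnegative square root. Property (Env3) is Lemma \ref{sec:Koren}(i): $|T_{n,A}(\cos\theta)|=|R(\theta)|\le E(\theta)=E_A(\cos\theta)$. For (Env4), the radicand is a polynomial in $x$ and is nonnegative on $[-1,1]$, so $E_A$ is $C^\infty$ at every point where it is positive. Property (Env5) is where Lemma \ref{sec:Koren}(ii)--(iii) carries the argument: if $T_{n,A}(\cos\theta_0)=E_A(\cos\theta_0)>0$ then $R(\theta_0)=E(\theta_0)>0$, so by (ii) $I(\theta_0)=0$, and then (iii) gives $R'(\theta_0)=E'(\theta_0)$; dividing through by the common nonzero factor $-\sin\theta_0$ coming from the chain rule (valid for $x_0\in(-1,1)$) yields equality of the $x$-derivatives of $T_{n,A}$ and $E_A$ at $x_0$. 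The step needing the most care is precisely this last one --- translating Lemma \ref{sec:Koren}, stated in the variable $\theta$, into a statement in the variable $x$, and separately handling the endpoints $x=\pm1$, where $\sin\theta=0$ and one must argue by continuity. I would also remark that uniqueness of the envelope (hence the definite article) follows because, for $\theta/\pi$ irrational, the numbers $\cos\bigl((n-m)\theta+\arg P_A(e^{i\theta})\bigr)$ are dense in $[-1,1]$, so $\sup_n|T_{n,A}(\cos\theta)|=|P_A(e^{i\theta})|$ and no smaller function can satisfy (Env3) together with (Env5).
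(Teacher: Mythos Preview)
Your proof is correct and follows essentially the same route as the paper: both realise $T_{n,A}(\cos\theta)$ as the real part of a complex function $z_A(\theta)$ (your $F_n(\theta)=e^{i(n-m)\theta}P_A(e^{i\theta})$ is exactly the paper's $z_A$), compute $|z_A|^2=\sum_{i,k}a_ia_k\cos((i-k)\theta)$ via $\cos A\cos B+\sin A\sin B=\cos(A-B)$ or the equivalent $P_A(e^{i\theta})P_A(e^{-i\theta})$ expansion, and then invoke Lemma~\ref{sec:Koren} for (Env5). Your write-up is a little more explicit in identifying the envelope as $|P_A(e^{i\theta})|$ from the outset (thereby anticipating Theorem~\ref{cha:odnos}) and in supplying the chain-rule translation from $\theta$ to $x$ and a uniqueness remark, but these are refinements of the same argument rather than a different approach.
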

\begin{proof}

Let $z_A(t)=a_0\cos(nt)+a_{1}\cos((n-1)t)+\cdots+a_{m}\cos((n-m)t)+i(a_0\sin(nt)+a_{1}\sin((n-1)t)+\cdots+a_{m}\sin((n-m)t))$
be an auxiliary function on $t\in \mathbb{R}$.
We can see that $T_{n,A}(x)=Re(z_A(t))$ so $|T_{n,A}(x)|^2\le|z_A(t)|^2$, $x=\cos(t)$. We will show that $E_A(x)=|z(t)|$.


\[|z_A(t)|^2=\left (\sum_{k=0}^m a_k\cos((n-k)t)\right )^2+\left (\sum_{k=0}^m a_k\sin((n-k)t)\right )^2\]
\[=\sum_{i=0}^m\sum_{k=0}^m a_i a_k\cos((n-i)t)\cos((n-k)t)+\sum_{i=0}^m\sum_{k=0}^m a_i a_k\sin((n-i)t)\sin((n-k)t)\]
\[=\sum_{i=0}^m\sum_{k=0}^m a_i a_k(\cos((n-i)t)\cos((n-k)t)+\sin((n-i)t)\sin((n-k)t))\]
\[=\sum_{i=0}^m\sum_{k=0}^m a_i a_k\cos((i-k)t).\]

If we substitute $x=\cos(t)$ in $\cos((i-k)t)$ we get $T_{|i-k|}(x)$. Since $E_A(x)$ does not depend on $n$ (Env1) is fulfilled. (Env2), (Env3), (Env4) are straightforward. Using previous lemma if $R(t)=Re(z(t))$, $I(t)=Im(z(t))$,
we can easily obtain (Env5).
\end{proof}
It is useful to calculate the envelope of the A-Chebyshev polynomial for $m=1,2,3,4$. Actually, we give the calculation of the square of the envelope, to avoid cumbersome square roots. Using previous formula we have:
\begin{enumerate}[({m}=1)]

\item $a_0^2+a_1^2+2a_0a_1x$;

\item $a_0^2+a_1^2+a_2^2+2(a_0a_1+a_1a_2)x+2a_0a_2(2x^2-1)=$

$=a_0^2+a_1^2+a_2^2-2a_0a_2+(2a_0a_1+2a_1a_2)x+4a_0a_2x^2$;

\item $a_0^2+a_1^2+a_2^2+a_3^2+2(a_0a_1+a_1a_2+a_2a_3)x+2(a_0a_2+a_1a_3)(2x^2-1)+2a_0a_3(4x^3-3x)=$

 $=a_0^2+a_1^2+a_2^2+a_3^2-2a_0a_2-2a_1a_3+(2a_0a_1+2a_1a_2+2a_2a_3-6a_0a_3)x+(4a_0a_2+4a_1a_3)x^2+8a_0a_3x^3$;

\item $a_0^2+a_1^2+a_2^2+a_3^2+a_4^2+2(a_0a_1+a_1a_2+a_2a_3+a_3a_4)x+2(a_0a_2+a_1a_3+a_2a_4)(2x^2-1)+2(a_0a_3+a_1a_4)(4x^3-3x)+2a_0a_4(8x^4-8x^2+1)=$

 $=a_0^2+a_1^2+a_2^2+a_3^2+a_4^2-2a_0a_2-2a_1a_3-2a_2a_4+(2a_0a_1+2a_1a_2+2a_2a_3+a_3a_4-6a_0a_3-6a_1a_4)x+(4a_0a_2+4a_1a_3+4a_2a_4-16a_0a_4)x^2+(8a_0a_3+8a_1a_4)x^3+16a_0a_4x^4$.
\end{enumerate}
\begin{rem}
There is a connection with the theory of signal processing. The analytic signal $z(t)$ can be expressed in terms of complex polar coordinates, $z(t) = f(t) + i \hat{f}(t) = A(t)e^{i\phi(t)}$ where $A(t) =\sqrt{f^2(t) + \hat{f}^2(t)}$,
and $\phi(t) = \arctan \frac{\hat{f}(t)}{f(t)}$. These functions are respectively called the amplitude envelope and instantaneous phase of the signal, $\hat{f}(t)$ is Hilbert transform of $f(t)$.
\end{rem}
\section{Connection between the envelope and the characteristic polynomial}

Until now we used the envelope to describe the graph of the A-Chebyshev polynomial if $x$ is of modulus not greater than 1. If $|x|>1$ we preferred the characteristic polynomial $P_A(x)$. It is natural to ask, is there any connection between the envelope and the characteristic polynomial of the A-Chebyshev polynomial. The next theorem shows that the answer is affirmative.

\begin{thm}\label{cha:odnos}
The envelope of the A-Chebyshev polynomial is the function \[E_A(x)=\sqrt{\left|P_A(x+\sqrt{x^2-1})P_A(x-\sqrt{x^2-1})\right|}.\]
\end{thm}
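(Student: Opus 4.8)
The plan is to reduce this to Theorem~\ref{cha:obvojnica} by showing that the polynomial standing under the absolute value there is \emph{identically} equal to $P_A(w)P_A(w^{-1})$, where $w=x+\sqrt{x^2-1}$, so that $w^{-1}=x-\sqrt{x^2-1}$ and $x=\tfrac12(w+w^{-1})$. Once this algebraic identity is in hand, taking the square root of the modulus of both sides finishes the proof at once.

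First I would record the elementary fact that $T_j(x)=\tfrac12(w^j+w^{-j})$, and that therefore, for any integers $i,k$,
\[
T_{|i-k|}(x)=\tfrac12\bigl(w^{|i-k|}+w^{-|i-k|}\bigr)=\tfrac12\bigl(w^{i-k}+w^{k-i}\bigr),
\]
independently of the sign of $i-k$. Substituting this into the compact expression from Theorem~\ref{cha:obvojnica} gives
\[
\sum_{i=0}^m\sum_{k=0}^m a_ia_kT_{|i-k|}(x)=\frac12\sum_{i=0}^m\sum_{k=0}^m a_ia_k\bigl(w^{i-k}+w^{k-i}\bigr).
\]
Interchanging the roles of the summation indices $i$ and $k$ shows that the two terms in the bracket contribute equally, so the double sum collapses to $\sum_{i,k}a_ia_kw^{i-k}=\bigl(\sum_{i=0}^m a_iw^i\bigr)\bigl(\sum_{k=0}^m a_kw^{-k}\bigr)$.

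Next I would identify each factor with the characteristic polynomial. From $P_A(x)=\sum_{i=0}^m a_ix^{m-i}$ we get $\sum_{i=0}^m a_iw^i=w^m\sum_{i=0}^m a_iw^{i-m}=w^mP_A(w^{-1})$ and, symmetrically, $\sum_{k=0}^m a_kw^{-k}=w^{-m}P_A(w)$. Multiplying, the powers $w^{m}$ and $w^{-m}$ cancel and we obtain
\[
\sum_{i=0}^m\sum_{k=0}^m a_ia_kT_{|i-k|}(x)=P_A(w)P_A(w^{-1})=P_A\bigl(x+\sqrt{x^2-1}\bigr)\,P_A\bigl(x-\sqrt{x^2-1}\bigr),
\]
which combined with Theorem~\ref{cha:obvojnica} yields $E_A(x)=\sqrt{\bigl|P_A(x+\sqrt{x^2-1})P_A(x-\sqrt{x^2-1})\bigr|}$, as claimed. (The same conclusion can be reached through the auxiliary function of the previous section: when $x=\cos t$ and $w=e^{it}$, one has $z_A(t)=w^{n-m}P_A(w)$, whence $E_A(x)^2=|z_A(t)|^2=|P_A(w)|^2=P_A(w)\overline{P_A(w)}=P_A(w)P_A(w^{-1})$ since the $a_i$ are real and $\overline w=w^{-1}$ on the unit circle; the identity for all real $x$ then follows because $P_A(w)P_A(w^{-1})$ is a genuine polynomial in $x$, being symmetric under $w\leftrightarrow w^{-1}$.)

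I do not expect a serious obstacle here; the only points needing care are the symmetrization step (legitimate precisely because of the absolute value in $T_{|i-k|}$) and the observation that $P_A(w)P_A(w^{-1})$, although written via the two-valued $\sqrt{x^2-1}$, is invariant under the exchange $\sqrt{x^2-1}\mapsto-\sqrt{x^2-1}$ and hence a well-defined polynomial in $x$; this is also what justifies keeping the modulus in the statement, since the common quantity need not be nonnegative once $|x|>1$.
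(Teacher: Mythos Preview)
Your proof is correct and follows essentially the same route as the paper: start from the compact form in Theorem~\ref{cha:obvojnica}, substitute $T_{|i-k|}(x)=\tfrac12(w^{i-k}+w^{k-i})$, use the index swap $i\leftrightarrow k$ to collapse the two halves, and factor the resulting double sum into $P_A(w)P_A(w^{-1})$. The only cosmetic difference is that the paper inserts $w^{m-i}w^{-m+k}$ to read off $P_A(w)$ and $P_A(w^{-1})$ directly, whereas you factor as $(\sum a_iw^i)(\sum a_kw^{-k})$ and then pull out $w^{\pm m}$; your parenthetical alternative via $z_A(t)$ and your remarks on well-definedness are extras not in the paper but perfectly sound.
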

\begin{proof}
We shall start from the compact form of the envelope given in Theorem \ref{cha:obvojnica} and use well known \cite{MasHan} formula $T_n(x) = \frac{1}{2}(w^n+w^{-n})$ where $w=x+\sqrt{x^2-1}$.
\begin{eqnarray*}
E_A(x) &=&\sqrt{\left|\sum_{i=0}^m \sum_{k=0}^m a_ia_kT_{|i-k|}(x)\right|}\\
     &=&\sqrt{\left|\sum_{i=0}^m \sum_{k=0}^m a_ia_k\frac{1}{2}(w^{i-k}+w^{-(i-k)})\right|}\\
     &=&\sqrt{\left|\sum_{K=0}^m \sum_{I=0}^m \frac{1}{2}a_Ka_Iw^{K-I}+\sum_{i=0}^m \sum_{k=0}^m \frac{1}{2}a_ia_kw^{k-i}\right|}.
\end{eqnarray*}
(Here we renamed $i$ with K and $k$ with $I$ in the first double sum. Now we shall switch the order of summing in the first double sum and apply obvious $a_Ka_I=a_Ia_K$.)
\begin{eqnarray*}
     &=&\sqrt{\left|\sum_{I=0}^m \sum_{K=0}^m \frac{1}{2}a_Ia_Kw^{K-I}+\sum_{i=0}^m \sum_{k=0}^m \frac{1}{2}a_ia_kw^{k-i}\right|}\\
     &=&\sqrt{\left|2\sum_{i=0}^m \sum_{k=0}^m \frac{1}{2}a_ia_kw^{k-i}\right|}\\
     &=&\sqrt{\left|\sum_{i=0}^m \sum_{k=0}^m a_ia_kw^{m-i}w^{-m+k}\right|}\\
     &=&\sqrt{\left|\sum_{i=0}^m a_iw^{m-i}\sum_{k=0}^m a_kw^{-m+k}\right|}\\
     &=&\sqrt{\left|P_A(w)P_A(w^{-1})\right|}.\\
     &=& \sqrt{\left|P_A(x+\sqrt{x^2-1})P_A(x-\sqrt{x^2-1})\right|}.
\end{eqnarray*}
\end{proof}
Figure 1. shows graphs of A-Chebyshev polynomials of the first kind $T_{14,(1,0,0,1)}(x)$, $T_{44,(1,0,0,1)}(x)$ together with their common envelope $E(x)=\sqrt{|2+6x-8x^3|}$.
\begin{fgr}
\begin{center}
\includegraphics {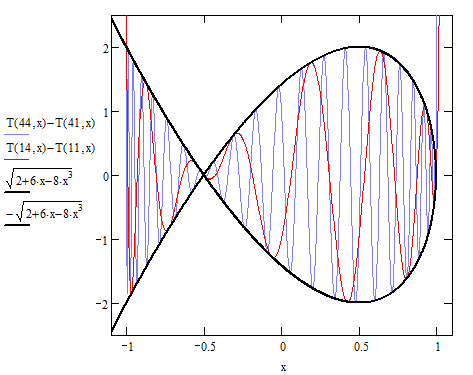}
\end{center}
\end{fgr}

\section{A-Chebyshev polynomial of the second kind}

It is well known \cite{Riv} that the Chebyshev polynomial $U_n(x)$ of the second kind is a polynomial
in $x$ of degree n, defined by the relation

$U_n(x) = \frac{\sin (n+1)\theta}{\sin\theta}\;\;$ when $x = \cos \theta.$

Let $A=(a_0,a_1,\ldots,a_m)$ be a (m+1)-tuple of real numbers, $a_0, a_m\ne 0$, $m\ge 1$ . We introduce an infinite sequence of polynomials
\[U_{n,A}(x)=a_0U_{n}(x)+a_{1}U_{n-1}(x)+\cdots+a_{m}U_{n-m}(x)\;\;(n\ge m).\]
We will refer to $U_{n,A}(x)$ as an A-Chebyshev polynomial of the second kind.
We can naturally extend this definition in the case $m=0$ and $A=a_0\ne 0$:
\[U_{n,a0}(x)=a_0U_{n}(x).\]
We will refer to the polynomial
\[P_{A}(x)=a_0x^m+a_{1}x^{m-1}+\cdots+a_{m}\]
as the characteristic polynomial of the A-Chebyshev polynomial.

\begin{lem}\label{sec:AChebII}
$U_{n,A}(x) = \frac{1}{w-w^{-1}}(w^{n+1-m}P_A(w)-w^{-n-1+m}P_A(w^{-1}))$ where $w=x+\sqrt{x^2-1}$.

\end{lem}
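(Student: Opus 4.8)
The plan is to mirror the proof of Lemma~\ref{sec:ACheb}, replacing the exponential representation of $T_n$ by the analogous one for $U_n$. First I would record the identity
\[U_n(x)=\frac{w^{n+1}-w^{-(n+1)}}{w-w^{-1}},\qquad w=x+\sqrt{x^2-1},\]
which holds because, writing $x=\cos\theta$ and $w=e^{i\theta}$, one has $w^{n+1}-w^{-(n+1)}=2i\sin(n+1)\theta$ and $w-w^{-1}=2i\sin\theta$, so the right-hand side equals $\sin(n+1)\theta/\sin\theta=U_n(x)$; since both sides are polynomials in $x$ (the apparent pole at $w=w^{-1}$, i.e.\ at $x=\pm1$, being removable), the identity persists for every $x$.

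Next I would substitute this into the definition $U_{n,A}(x)=\sum_{i=0}^m a_iU_{n-i}(x)$ and rearrange the sums exactly as in Lemma~\ref{sec:ACheb}:
\begin{eqnarray*}
U_{n,A}(x)&=&\sum_{i=0}^m a_iU_{n-i}(x)\\
&=&\frac{1}{w-w^{-1}}\sum_{i=0}^m a_i\left(w^{n-i+1}-w^{-(n-i+1)}\right)\\
&=&\frac{1}{w-w^{-1}}\left(w^{n+1-m}\sum_{i=0}^m a_iw^{m-i}-w^{-(n+1-m)}\sum_{i=0}^m a_iw^{-(m-i)}\right)\\
&=&\frac{1}{w-w^{-1}}\left(w^{n+1-m}P_A(w)-w^{-(n+1-m)}P_A(w^{-1})\right),
\end{eqnarray*}
which is the asserted formula, since $P_A(w)=\sum_{i=0}^m a_iw^{m-i}$ and $P_A(w^{-1})=\sum_{i=0}^m a_iw^{-(m-i)}$.

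Since every step is a routine algebraic rearrangement, there is essentially no obstacle. The only point deserving a line of care is the justification that $U_n(x)=(w^{n+1}-w^{-(n+1)})/(w-w^{-1})$ is a genuine polynomial identity valid for all real $x$ — in particular that the apparent singularity at $x=\pm1$ cancels — after which the displayed chain of equalities is purely formal, exactly as in the first-kind case of Lemma~\ref{sec:ACheb}.
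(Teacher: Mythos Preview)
Your proof is correct and follows the same approach as the paper: both substitute the standard identity $U_n(x)=(w^{n+1}-w^{-n-1})/(w-w^{-1})$ into the definition $U_{n,A}(x)=\sum_{i=0}^m a_iU_{n-i}(x)$ and factor out $w^{n+1-m}$ and $w^{-(n+1-m)}$ to recognize $P_A(w)$ and $P_A(w^{-1})$. The paper simply cites the $U_n$ formula from \cite{MasHan} rather than rederiving it, but the argument is otherwise identical.
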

\begin{proof}
Starting from the definition of A-Chebyshev polynomial and using well known \cite{MasHan} formula $U_n(x) = \frac{w^{n+1}-w^{-n-1}}{w-w^{-1}}$ we have:
\begin{eqnarray*}
U_{n,A}(x) &= &\sum_{i=0}^m a_iU_{n-i}(x)\\
     &=&\sum_{i=0}^m a_i\frac{w^{n+1-i}-w^{-n-1+i}}{w-w^{-1}}\\
     &=&\frac{1}{w-w^{-1}}(\sum_{i=0}^m a_iw^{n+1-i}-\sum_{i=0}^m a_iw^{-n-1+i})\\
     &=&\frac{1}{w-w^{-1}}(w^{n+1-m}\sum_{i=0}^m a_iw^{m-i}-w^{-n-1+m}\sum_{i=0}^m a_iw^{-m+i})\\
     &=& \frac{1}{w-w^{-1}}(w^{n+1-m}P_A(w)-w^{-n-1+m}P_A(w^{-1})).
\end{eqnarray*}
\end{proof}

\begin{thm}\label{cha:nulaPreko1II}
If there is a root $\omega$, out of the unit circle, of the polynomial $P_A$, that is $ P_A(\omega)=0, |\omega|>1$, then for every real number $\varepsilon > 0$, there exists a natural number $n_0$ such that for all $n > n_0$, there is a root $\xi$ of the A-Chebyshev polynomial of the second kind $U_{n,A}(x)$ such that $|\xi-\frac{1}{2}(\omega+\omega^{-1})|<\varepsilon$.
\end{thm}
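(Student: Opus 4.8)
The plan is to carry over the proof of Theorem~\ref{cha:nulaPreko1} almost verbatim, using Lemma~\ref{sec:AChebII} in place of Lemma~\ref{sec:ACheb} and running the same Rouch\'e argument; the only genuinely new point is checking that the extra factor $\frac{1}{w-w^{-1}}$ appearing in Lemma~\ref{sec:AChebII} is harmless. First I would pass to the variable $w=x+\sqrt{x^2-1}$, so that $x=\frac{1}{2}(w+w^{-1})$, and note that we only care about $x$ near $\frac{1}{2}(\omega+\omega^{-1})$ with $|\omega|>1$, hence about $w$ near $\omega$, where $w\neq0$ and $w\neq\pm1$. On that region Lemma~\ref{sec:AChebII} shows that $U_{n,A}(x)=0$ is equivalent to the vanishing of
\[F_n(w):=w^{n+1-m}P_A(w)-w^{-(n+1-m)}P_A(w^{-1}).\]
I would also record the elementary identity $F_n(w^{-1})=-F_n(w)$, which ensures that the value $U_{n,A}\!\left(\frac{1}{2}(w+w^{-1})\right)$ does not depend on which of the two reciprocal preimages of a given $x$ one takes.

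Next, given $\varepsilon>0$, I would choose $\delta=\min(\delta_1,\delta_2)$ exactly as in Theorem~\ref{cha:nulaPreko1}: $\delta_1>0$ so that $|w-\omega|<\delta_1$ forces $|\frac{1}{2}(w+w^{-1})-\frac{1}{2}(\omega+\omega^{-1})|<\varepsilon$ (continuity of $w\mapsto\frac{1}{2}(w+w^{-1})$ near $\omega$), and $\delta_2<|\omega|-1$ small enough that $\omega$ is the only zero of $P_A$ in the closed disc $C=\{\,|w-\omega|\le\delta\,\}$; note that $0\notin C$ and $|w|\ge|\omega|-\delta>1$ on $C$. Writing $f(w)=w^{n+1-m}P_A(w)$ and $g(w)=-w^{-(n+1-m)}P_A(w^{-1})$ so that $F_n=f+g$, on $\partial C$ one has $|f(w)|\ge(|\omega|-\delta)^{n+1-m}\min_{\partial C}|P_A|$ and $|g(w)|\le\max_{\partial C}|P_A(w^{-1})|$ (using $|w|^{-(n+1-m)}\le1$); since $\min_{\partial C}|P_A|>0$ and $|\omega|-\delta>1$, there is $n_0$ with $|f(w)|>|g(w)|$ on $\partial C$ for all $n>n_0$. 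Rouch\'e's theorem then gives that $F_n=f+g$ has, counted with multiplicity, as many zeros in $C$ as $f$ does, namely at least the one coming from $P_A(\omega)=0$ (the possible zero of $f$ at $0$ lies outside $C$). Let $\omega_1\in C$ be such a zero. Since $|\omega_1|\ge|\omega|-\delta>1$ we have $\omega_1\neq0,\pm1$, so $\xi:=\frac{1}{2}(\omega_1+\omega_1^{-1})$ is a genuine root of the polynomial $U_{n,A}$ by Lemma~\ref{sec:AChebII}, and $|\omega_1-\omega|<\delta\le\delta_1$ yields $|\xi-\frac{1}{2}(\omega+\omega^{-1})|<\varepsilon$.

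I expect no serious obstacle: the proof is a routine transcription of the first-kind case. The two things to watch are (i) that the denominator $w-w^{-1}$ never interferes, which is handled by the observation that the Rouch\'e zero $\omega_1$ lies strictly outside the unit circle, and (ii) the bookkeeping of the shifted exponent $n+1-m$ instead of $n-m$, which leaves all the estimates intact. One could also phrase the whole argument in the trigonometric variable as in Theorem~\ref{cha:nulaIspod1}, but the Rouch\'e route above is the cleanest since it reuses the constants already built in Theorem~\ref{cha:nulaPreko1}.
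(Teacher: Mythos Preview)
Your proposal is correct and follows essentially the same Rouch\'e argument as the paper's own proof, with the same choice of disc $C$ around $\omega$ and the same continuity/compactness estimates. The only cosmetic difference is that you strip off the nonvanishing factor $\tfrac{1}{w-w^{-1}}$ before defining $f$ and $g$, whereas the paper carries it along inside $f$ and $g$; your handling is in fact a bit cleaner.
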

\begin{proof}
It is convenient to use the previous lemma to express  $U_{n,A}(x)$ = $ \frac{1}{w-w^{-1}}(w^{n+1-m}P_A(w)-w^{-n-1+m}P_A(w^{-1}))$ where $w=x+\sqrt{x^2-1}$ or equivalently  $x=x(w)=\frac{1}{2}(w+w^{-1})$. Since $x(w)$ is continuous for $w>0$, there is $\delta_1 >0$ such that if $|w-\omega|<\delta_1$ then $|\frac{1}{2}(w+w^{-1})-\frac{1}{2}(\omega+\omega^{-1})|<\varepsilon$. We can take an $\delta_2 <|\omega|-1$ such that, in the circle $\{z:|z-\omega|\le \delta_2\}$, there is no root of $P_A(w)$ which is different from $ \omega$.
Let $\delta=\min(\delta_1, \delta_2)$ and $C=\{z:|z-\omega|\le \delta\}$.
Since $\partial C$, the boundary of $C$, is a compact set, $|P_A(w)|$, $|P_A(w^{-1})|$ are continuous on $\partial C$, there is $w_{min}$ where $|P_A(w)|$ gets its minimum and $w_{max}$ where $|P_A(w^{-1})|$ gets its maximum on $\partial C$. Since $\frac{1}{w-w^{-1}}|P_A(w_{max}^{-1})|$ is constant and $|\omega|-\delta >1$, there is $n_0$ such that $\frac{1}{w-w^{-1}}|P_A(w_{min})|(|\omega|-\delta)^{n_0+1-m}>\frac{1}{w-w^{-1}}|P_A(w_{max}^{-1})|$.
For $n\ge n_0$ let us denote $f(w)=\frac{1}{w-w^{-1}}w^{n+1-m}P_A(w)$, $g(w)=\frac{-1}{w-w^{-1}}w^{-n-1+m}P_A(w^{-1})$. This notation corresponds with Rouch\'{e}'s theorem which we intend to use. We have to prove that $|f(w)|>|g(w)|$ on $\partial C$.
Since $|w|\ge |\omega|-\delta >1$ we have on $\partial C$:
\begin{eqnarray*}
|f(w)| &= &\frac{1}{w-w^{-1}}|P_A(w)||w|^{n+1-m}\\
     &\ge&\frac{1}{w-w^{-1}}|P_A(w_{min})|(|\omega|-\delta)^{n_0+1-m}\\
     &>&\frac{1}{w-w^{-1}}|P_A(w_{max}^{-1})|\\
     &\ge&\frac{1}{w-w^{-1}}|P_A(w^{-1})||w|^{-(n+1-m)}|\\
     &=& |g(w)|.
\end{eqnarray*}
The conditions in Rouch\'{e}'s theorem are thus satisfied. Consequently, since $f(w)$ has root $\omega$, we conclude that $f(w)+g(w)$ has a root, let it be $\omega_1$, inside the circle $C$ . Clearly, since $|\omega_1-\omega|<\delta_1$, if we denote $\xi=\frac{1}{2}(\omega_1+\omega_1^{-1})$, we conclude
$|\xi-\frac{1}{2}(\omega+\omega^{-1})|<\varepsilon$. Finally, we conclude that $U_{n,A}(\xi)=\frac{1}{w-w^{-1}}P_A(\omega_1)\omega_1^{n+1-m}-\frac{1}{w-w^{-1}}P_A(\omega_1^{-1})\omega_1^{-(n+1-m)}=f(\omega_1)+g(\omega_1)=0$.
\end{proof}

\section{An application in number theory}


Recall that $q>1$ is a Pisot number if $q$ is an algebraic
integer, whose other conjugates are of modulus strictly less
than 1.
Salem proved that every Pisot number is a limit point of the set $T$ of Salem numbers.
Let $Q(x)=x^mP(\frac{1}{x})=a_0+a_{1}x+\cdots+a_{m}x^m$ be the reciprocal polynomial of the polynomial $P(x)=a_0x^m+a_{1}x^{m-1}+\cdots+a_{m}$. Salem showed that if $P(x)$ is the minimal polynomial of a Pisot number $q$ then $R_k(x)=x^kP(x) + Q(x)$ is polynomial with a root $\tau_k$ that is a Salem number, and the limit of the sequence $\tau_k$ is  $q$, $k\rightarrow\infty$. There is a connection between Salem sequence $R_k(x)$ with A-Chebyshev polynomials $T_{n,A}(x)$ the characteristic polynomial of which is $P(x)$. We have seen that $T_{n,A}(\frac{1}{2}(w^{}+w^{-1}))= T_{n,A}(x)= \sum_{i=0}^m a_iT_{n-i}(x)=\sum_{i=0}^m a_i\frac{1}{2}(w^{n-i}+w^{-n+i})$. Now we can show that $2w^nT_{n,A}(\frac{1}{2}(w^{}+w^{-1}))=R_{2n-m}(w)$. Really, we obtain $2w^nT_{n,A}(\frac{1}{2}(w^{}+w^{-1}))=\sum_{i=0}^m a_i(w^{2n-i}+w^{i})=w^{2n-m}\sum_{i=0}^m a_i(w^{m-i})+\sum_{i=0}^m a_i(w^{i})=w^{2n-m}P(w)+Q(w)$.

The question is what is going on if we use A-Chebyshev polynomial of the second kind instead of $T_{n,A}(x)$.
We will demonstrate that one more sequence of Salem numbers, which converges to the Pisot number $q$, appears.
It is obvious that $U_{n,A}(\frac{1}{2}(w^{}+w^{-1}))= U_{n,A}(x)= \sum_{i=0}^m a_iU_{n-i}(x)=\sum_{i=0}^m \frac{a_i}{w-w^{-1}}(w^{n+1-i}-w^{-n-1+i})=\sum_{i=0}^m a_i(w^{n-i}+w^{n-i-2}+w^{n-i-4}\cdots+w^{-n+i+2}+w^{-n+i}))$. We claim that $S_{2n}(w)=w^nU_{n,A}(\frac{1}{2}(w^{}+w^{-1}))$ is polynomial of degree $2n$ with a root $\tau_{2n}$ that is a Salem number, and the limit of the sequence $\tau_{2n}$ is  $q$, $n\rightarrow\infty$. Using the previous theorem it is clear that there is a root $\tau_{2n}$ of $S_{2n}(w)$ such as $\tau_{2n}\rightarrow q$. It is obvious that $S_{2n}(w)$ is a reciprocal polynomial. It remains to be proved that all other roots of $S_{2n}(w)$ are in the unit circle. We shall apply the method Salem (communicated by Hirschman) used to prove the same property of his sequence $R_{k}(x)$ \cite{Sal}. Using Lemma \ref{sec:AChebII} we have
\begin{eqnarray*}
S_{2n}(w)&=&w^nU_{n,A}(\frac{1}{2}(w^{}+w^{-1}))\\  		  	                   &=&\frac{w^n}{w-w^{-1}}(w^{n+1-m}P_A(w)-w^{-n-1+m}P_A(w^{-1}))\\
     &=&\frac{w^{n+1}}{w^2-1}(w^{n+1-m}P_A(w)-w^{-n-1+m}P_A(w^{-1}))\\
     &=&\frac{1}{w^2-1}(w^{2n+2-m}P_A(w)-w^{m}P_A(w^{-1}))\\
     &=&\frac{1}{w^2-1}(w^{2n+2-m}P_A(w)-Q(w)).
\end{eqnarray*}
We denote by $\epsilon$ a positive number and consider the equation
\[(1 + \epsilon)w^{2n+2-m}P_A(w)- Q(w) = 0.\]
Since for $ |w|=1$ we have $|P(w)|=|Q(w)|$, it follows by Rouch\'{e}'s theorem that inside
the circle $ |w|=1$ the number of roots of the last equation is equal to the number
of roots of $w^{2n+2-m}P(w) $, that is, $(2n+2-m) +m - 1$. As $\epsilon \rightarrow 0$, these roots vary continuously.
Hence, for $\epsilon=0$ we have $2n+1$ roots with modulus $\le 1$. It is obvious that two roots are $1,-1$ so the fraction can be reduced with $w^2-1$. Finally we conclude that at most one root of $S_{2n}(w)$ is outside the unit circle.
\begin{exm}\label{sec:exmSal}
Let $q$ be the golden ratio, the greater root of $P(x)=x^2-x-1$.
Then $R_k(w)=w^{k+2}-w^{k+1}-w^k-w^2-w+1$ is the Salem's sequence.
Our sequence of Salem numbers $\tau_{2m}$ which converge to $q$ consists of the greatest in modulus roots of the polynomials
$S_{2n}(w)=\frac{1}{w^2-1}(w^{2n+2-m}P_A(w)-Q(w))=\frac{1}{w^2-1}(w^{2n}(w^2-w-1)+w^2+w-1)=$
$(w^{2n}+w^{2n-2}+w^{2n-4}\cdots+w^2+1)-(w^{2n-1}+w^{2n-3}+w^{2n-5}\cdots+w^{3}+w^{})-(w^{2n-2}+w^{2n-4}+w^{2n-6}\cdots+w^{4}+w^{2}).$
Finally \[S_{2n}(w)=w^{2n}-(w^{2n-1}+w^{2n-3}+w^{2n-5}\cdots+w^{3}+w^{})+1.\]
\end{exm}


\end{document}